\def\cal{\mathcal}
\let\pdfoutput=\undefined\fi
\chardef\@x10\chardef\@xv60
\def\tcitime{
\def\@time{%
  \@minute\time\@hour\@minute\divide\@hour\@xv
  \ifnum\@hour<\@x 0\fi\the\@hour:%
  \multiply\@hour\@xv\advance\@minute-\@hour
  \ifnum\@minute<\@x 0\fi\the\@minute
  }}%
\def\x@hyperref#1#2#3{%
   \catcode`\~ = 12
   \catcode`\$ = 12
   \catcode`\_ = 12
   \catcode`\# = 12
   \catcode`\& = 12
   \catcode`\% = 12
   \y@hyperref{#1}{#2}{#3}%
}
\def\y@hyperref#1#2#3#4{%
   #2\ref{#4}#3
   \catcode`\~ = 13
   \catcode`\$ = 3
   \catcode`\_ = 8
   \catcode`\# = 6
   \catcode`\& = 4
   \catcode`\% = 14
}
\def\QCTOpt[#1]#2{%
  \def\QCTOptB{#1}
  \def\QCTOptA{#2}
}
\def\QCTNOpt#1{%
  \def\QCTOptA{#1}
  \let\QCTOptB\empty
}
\def\Qct{%
  \@ifnextchar[{%
    \QCTOpt}{\QCTNOpt}
}
\def\QCBOpt[#1]#2{%
  \def\QCBOptB{#1}%
  \def\QCBOptA{#2}%
}
\def\QCBNOpt#1{%
  \def\QCBOptA{#1}%
  \let\QCBOptB\empty
}
\def\Qcb{%
  \@ifnextchar[{%
    \QCBOpt}{\QCBNOpt}%
}
\def\PrepCapArgs{%
  \ifx\QCBOptA\empty
    \ifx\QCTOptA\empty
      {}%
    \else
      \ifx\QCTOptB\empty
        {\QCTOptA}%
      \else
        [\QCTOptB]{\QCTOptA}%
      \fi
    \fi
  \else
    \ifx\QCBOptA\empty
      {}%
    \else
      \ifx\QCBOptB\empty
        {\QCBOptA}%
      \else
        [\QCBOptB]{\QCBOptA}%
      \fi
    \fi
  \fi
}
\def\GRAPHICSPS#1{%
 \ifcase\GRAPHICSTYPE
   \special{ps: #1}%
 \or
   \special{language "PS", include "#1"}%
 \fi
}%
\def\graffile#1#2#3#4{%
    \bgroup
	   \@inlabelfalse
       \leavevmode
       \@ifundefined{bbl@deactivate}{\def~{\string~}}{\activesoff}%
        \raise -#4 \BOXTHEFRAME{%
           \hbox to #2{\raise #3\hbox to #2{\null #1\hfil}}}%
    \egroup
}%
\def\draftbox#1#2#3#4{%
 \leavevmode\raise -#4 \hbox{%
  \frame{\rlap{\protect\tiny #1}\hbox to #2%
   {\vrule height#3 width\z@ depth\z@\hfil}%
  }%
 }%
}%
\let\nographics=\@msidraft
\newif\ifwasdraft
\def\GRAPHIC#1#2#3#4#5{%
   \ifnum\@msidraft=\@ne\draftbox{#2}{#3}{#4}{#5}%
   \else\graffile{#1}{#3}{#4}{#5}%
   \fi
}
\def\addtoLaTeXparams#1{%
    \edef\LaTeXparams{\LaTeXparams #1}}%
\newif\ifBoxFrame \BoxFramefalse
\newif\ifOverFrame \OverFramefalse
\newif\ifUnderFrame \UnderFramefalse
\def\BOXTHEFRAME#1{%
   \hbox{%
      \ifBoxFrame
         \frame{#1}%
      \else
         {#1}%
      \fi
   }%
}
\def\doFRAMEparams#1{\BoxFramefalse\OverFramefalse\UnderFramefalse\readFRAMEparams#1\end}%
\def\readFRAMEparams#1{%
 \ifx#1\end%
  \let\next=\relax
  \else
  \ifx#1i\dispkind=\z@\fi
  \ifx#1d\dispkind=\@ne\fi
  \ifx#1f\dispkind=\tw@\fi
  \ifx#1t\addtoLaTeXparams{t}\fi
  \ifx#1b\addtoLaTeXparams{b}\fi
  \ifx#1p\addtoLaTeXparams{p}\fi
  \ifx#1h\addtoLaTeXparams{h}\fi
  \ifx#1X\BoxFrametrue\fi
  \ifx#1O\OverFrametrue\fi
  \ifx#1U\UnderFrametrue\fi
  \ifx#1w
    \ifnum\@msidraft=1\wasdrafttrue\else\wasdraftfalse\fi
    \@msidraft=\@ne
  \fi
  \let\next=\readFRAMEparams
  \fi
 \next
 }%
\def\IFRAME#1#2#3#4#5#6{%
      \bgroup
      \let\QCTOptA\empty
      \let\QCTOptB\empty
      \let\QCBOptA\empty
      \let\QCBOptB\empty
      #6%
      \parindent=0pt
      \leftskip=0pt
      \rightskip=0pt
      \setbox0=\hbox{\QCBOptA}%
      \@tempdima=#1\relax
      \ifOverFrame
          \typeout{This is not implemented yet}%
          \show\HELP
      \else
         \ifdim\wd0>\@tempdima
            \advance\@tempdima by \@tempdima
            \ifdim\wd0 >\@tempdima
               \setbox1 =\vbox{%
                  \unskip\hbox to \@tempdima{\hfill\GRAPHIC{#5}{#4}{#1}{#2}{#3}\hfill}%
                  \unskip\hbox to \@tempdima{\parbox[b]{\@tempdima}{\QCBOptA}}%
               }%
               \wd1=\@tempdima
            \else
               \textwidth=\wd0
               \setbox1 =\vbox{%
                 \noindent\hbox to \wd0{\hfill\GRAPHIC{#5}{#4}{#1}{#2}{#3}\hfill}\\%
                 \noindent\hbox{\QCBOptA}%
               }%
               \wd1=\wd0
            \fi
         \else
            \ifdim\wd0>0pt
              \hsize=\@tempdima
              \setbox1=\vbox{%
                \unskip\GRAPHIC{#5}{#4}{#1}{#2}{0pt}%
                \break
                \unskip\hbox to \@tempdima{\hfill \QCBOptA\hfill}%
              }%
              \wd1=\@tempdima
           \else
              \hsize=\@tempdima
              \setbox1=\vbox{%
                \unskip\GRAPHIC{#5}{#4}{#1}{#2}{0pt}%
              }%
              \wd1=\@tempdima
           \fi
         \fi
         \@tempdimb=\ht1
         \advance\@tempdimb by -#2
         \advance\@tempdimb by #3
         \leavevmode
         \raise -\@tempdimb \hbox{\box1}%
      \fi
      \egroup%
}%
\def\DFRAME#1#2#3#4#5{%
  \vspace\topsep
  \hfil\break
  \bgroup
     \leftskip\@flushglue
	 \rightskip\@flushglue
	 \parindent\z@
	 \parfillskip\z@skip
     \let\QCTOptA\empty
     \let\QCTOptB\empty
     \let\QCBOptA\empty
     \let\QCBOptB\empty
	 \vbox\bgroup
        \ifOverFrame 
           #5\QCTOptA\par
        \fi
        \GRAPHIC{#4}{#3}{#1}{#2}{\z@}%
        \ifUnderFrame 
           \break#5\QCBOptA
        \fi
	 \egroup
  \egroup
  \vspace\topsep
  \break
}%
\def\FFRAME#1#2#3#4#5#6#7{%
  \@ifundefined{floatstyle}
    {
     \begin{figure}[#1]%
    }
    {
	 \ifx#1h
      \begin{figure}[H]%
	 \else
      \begin{figure}[#1]%
	 \fi
	}
  \let\QCTOptA\empty
  \let\QCTOptB\empty
  \let\QCBOptA\empty
  \let\QCBOptB\empty
  \ifOverFrame
    #4
    \ifx\QCTOptA\empty
    \else
      \ifx\QCTOptB\empty
        \caption{\QCTOptA}%
      \else
        \caption[\QCTOptB]{\QCTOptA}%
      \fi
    \fi
    \ifUnderFrame\else
      \label{#5}%
    \fi
  \else
    \UnderFrametrue%
  \fi
  \begin{center}\GRAPHIC{#7}{#6}{#2}{#3}{\z@}\end{center}%
  \ifUnderFrame
    #4
    \ifx\QCBOptA\empty
      \caption{}%
    \else
      \ifx\QCBOptB\empty
        \caption{\QCBOptA}%
      \else
        \caption[\QCBOptB]{\QCBOptA}%
      \fi
    \fi
    \label{#5}%
  \fi
  \end{figure}%
 }%
\def\makeactives{
  \catcode`\"=\active
  \catcode`\;=\active
  \catcode`\:=\active
  \catcode`\'=\active
  \catcode`\~=\active
}
   \gdef\activesoff{%
      \def"{\string"}%
      \def;{\string;}%
      \def:{\string:}%
      \def'{\string'}%
      \def~{\string~}%
    }
\def\FRAME#1#2#3#4#5#6#7#8{%
 \bgroup
 \ifnum\@msidraft=\@ne
   \wasdrafttrue
 \else
   \wasdraftfalse%
 \fi
 \def\LaTeXparams{}%
 \dispkind=\z@
 \def\LaTeXparams{}%
 \doFRAMEparams{#1}%
 \ifnum\dispkind=\z@\IFRAME{#2}{#3}{#4}{#7}{#8}{#5}\else
  \ifnum\dispkind=\@ne\DFRAME{#2}{#3}{#7}{#8}{#5}\else
   \ifnum\dispkind=\tw@
    \edef\@tempa{\noexpand\FFRAME{\LaTeXparams}}%
    \@tempa{#2}{#3}{#5}{#6}{#7}{#8}%
    \fi
   \fi
  \fi
  \ifwasdraft\@msidraft=1\else\@msidraft=0\fi{}%
  \egroup
 }%
\def\TEXUX#1{"texux"}
\long\def\QQQ#1#2{%
     \long\expandafter\def\csname#1\endcsname{#2}}%
\long\def\QQA#1#2{}%
\def\QTR#1#2{{\csname#1\endcsname {#2}}}%
\def\EXPAND#1[#2]#3{}%
\def\NOEXPAND#1[#2]#3{}%
\def\LaTeXparent#1{}%
\def\ChildStyles#1{}%
\def\ChildDefaults#1{}%
\def\QTagDef#1#2#3{}%
  \providecommand{\UNICODE}[2][]{\protect\rule{.1in}{.1in}}
  \providecommand{\U}[1]{\protect\rule{.1in}{.1in}}
\def\QQfnmark#1{\footnotemark}
 \def\abstract{%
  \if@twocolumn
   \section*{Abstract (Not appropriate in this style!)}%
   \else \small 
   \begin{center}{\bf Abstract\vspace{-.5em}\vspace{\z@}}\end{center}%
   \quotation 
   \fi
  }%
   \def\registered{\relax\ifmmode{}\r@gistered
                    \else$\m@th\r@gistered$\fi}%
 \def\r@gistered{^{\ooalign
  {\hfil\raise.07ex\hbox{$\scriptstyle\rm\text{R}$}\hfil\crcr
  \mathhexbox20D}}}}{}%
\newdimen\theight
\def\newfmtname{LaTeX2e}
  \DeclareOldFontCommand{\rm}{\normalfont\rmfamily}{\mathrm}
  \DeclareOldFontCommand{\sf}{\normalfont\sffamily}{\mathsf}
  \DeclareOldFontCommand{\tt}{\normalfont\ttfamily}{\mathtt}
  \DeclareOldFontCommand{\bf}{\normalfont\bfseries}{\mathbf}
  \DeclareOldFontCommand{\it}{\normalfont\itshape}{\mathit}
  \DeclareOldFontCommand{\sl}{\normalfont\slshape}{\@nomath\sl}
  \DeclareOldFontCommand{\sc}{\normalfont\scshape}{\@nomath\sc}
\def\alpha{{\Greekmath 010B}}%
\def\beta{{\Greekmath 010C}}%
\def\gamma{{\Greekmath 010D}}%
\def\delta{{\Greekmath 010E}}%
\def\epsilon{{\Greekmath 010F}}%
\def\zeta{{\Greekmath 0110}}%
\def\eta{{\Greekmath 0111}}%
\def\theta{{\Greekmath 0112}}%
\def\iota{{\Greekmath 0113}}%
\def\kappa{{\Greekmath 0114}}%
\def\lambda{{\Greekmath 0115}}%
\def\mu{{\Greekmath 0116}}%
\def\nu{{\Greekmath 0117}}%
\def\xi{{\Greekmath 0118}}%
\def\pi{{\Greekmath 0119}}%
\def\rho{{\Greekmath 011A}}%
\def\sigma{{\Greekmath 011B}}%
\def\tau{{\Greekmath 011C}}%
\def\upsilon{{\Greekmath 011D}}%
\def\phi{{\Greekmath 011E}}%
\def\chi{{\Greekmath 011F}}%
\def\psi{{\Greekmath 0120}}%
\def\omega{{\Greekmath 0121}}%
\def\varepsilon{{\Greekmath 0122}}%
\def\vartheta{{\Greekmath 0123}}%
\def\varpi{{\Greekmath 0124}}%
\def\varrho{{\Greekmath 0125}}%
\def\varsigma{{\Greekmath 0126}}%
\def\varphi{{\Greekmath 0127}}%
\def\nabla{{\Greekmath 0272}}
\def\FindBoldGroup{%
   {\setbox0=\hbox{$\mathbf{x\global\edef\theboldgroup{\the\mathgroup}}$}}%
}
\def\Greekmath#1#2#3#4{%
    \if@compatibility
        \ifnum\mathgroup=\symbold
           \mathchoice{\mbox{\boldmath$\displaystyle\mathchar"#1#2#3#4$}}%
                      {\mbox{\boldmath$\textstyle\mathchar"#1#2#3#4$}}%
                      {\mbox{\boldmath$\scriptstyle\mathchar"#1#2#3#4$}}%
                      {\mbox{\boldmath$\scriptscriptstyle\mathchar"#1#2#3#4$}}%
        \else
           \mathchar"#1#2#3#4%
        \fi 
    \else 
        \FindBoldGroup
        \ifnum\mathgroup=\theboldgroup 
           \mathchoice{\mbox{\boldmath$\displaystyle\mathchar"#1#2#3#4$}}%
                      {\mbox{\boldmath$\textstyle\mathchar"#1#2#3#4$}}%
                      {\mbox{\boldmath$\scriptstyle\mathchar"#1#2#3#4$}}%
                      {\mbox{\boldmath$\scriptscriptstyle\mathchar"#1#2#3#4$}}%
        \else
           \mathchar"#1#2#3#4%
        \fi     	    
	  \fi}
\newif\ifGreekBold  \GreekBoldfalse
\let\SAVEPBF=\pbf
\def\pbf{\GreekBoldtrue\SAVEPBF}%
  \newcounter{equationnumber}  
  \def\mathletters{%
     \addtocounter{equation}{1}
     \edef\@currentlabel{\theequation}%
     \setcounter{equationnumber}{\c@equation}
     \setcounter{equation}{0}%
     \edef\theequation{\@currentlabel\noexpand\alph{equation}}%
  }
    \def\BibTeX{{\rm B\kern-.05em{\sc i\kern-.025em b}\kern-.08em
                 T\kern-.1667em\lower.7ex\hbox{E}\kern-.125emX}}}{}%
\def\AmS{{\protect\usefont{OMS}{cmsy}{m}{n}%
                A\kern-.1667em\lower.5ex\hbox{M}\kern-.125emS}}}{}%
\def\@@eqncr{\let\@tempa\relax
    \ifcase\@eqcnt \def\@tempa{& & &}\or \def\@tempa{& &}%
      \else \def\@tempa{&}\fi
     \@tempa
     \if@eqnsw
        \iftag@
           \@taggnum
        \else
           \@eqnnum\stepcounter{equation}%
        \fi
     \fi
     \global\tag@false
     \global\@eqnswtrue
     \global\@eqcnt\z@\cr}
\def\TCItag{\@ifnextchar*{\@TCItagstar}{\@TCItag}}
\def\@TCItag#1{%
    \global\tag@true
    \global\def\@taggnum{(#1)}%
    \global\def\@currentlabel{#1}}
\def\@TCItagstar*#1{%
    \global\tag@true
    \global\def\@taggnum{#1}%
    \global\def\@currentlabel{#1}}
\def\tint{\msi@int\textstyle\int}%
\def\tiint{\msi@int\textstyle\iint}%
\def\tiiint{\msi@int\textstyle\iiint}%
\def\tiiiint{\msi@int\textstyle\iiiint}%
\def\tidotsint{\msi@int\textstyle\idotsint}%
\def\toint{\msi@int\textstyle\oint}%
\newtoks\temptoksa
\newtoks\temptoksb
\newtoks\temptoksc
\def\msi@int#1#2{%
 \def\@temp{{#1#2\the\temptoksc_{\the\temptoksa}^{\the\temptoksb}}}%
 \futurelet\@nextcs
 \@int
}
\def\@int{%
   \ifx\@nextcs\limits
      \typeout{Found limits}%
      \temptoksc={\limits}%
	  \let\@next\@intgobble%
   \else\ifx\@nextcs\nolimits
      \typeout{Found nolimits}%
      \temptoksc={\nolimits}%
	  \let\@next\@intgobble%
   \else
      \typeout{Did not find limits or no limits}%
      \temptoksc={}%
      \let\@next\msi@limits%
   \fi\fi
   \@next   
}%
\def\@intgobble#1{%
   \typeout{arg is #1}%
   \msi@limits
}
\def\msi@limits{%
   \temptoksa={}%
   \temptoksb={}%
   \@ifnextchar_{\@limitsa}{\@limitsb}%
}
\def\@limitsa_#1{%
   \temptoksa={#1}%
   \@ifnextchar^{\@limitsc}{\@temp}%
}
\def\@limitsb{%
   \@ifnextchar^{\@limitsc}{\@temp}%
}
\def\@limitsc^#1{%
   \temptoksb={#1}%
   \@ifnextchar_{\@limitsd}{\@temp}%
}
\def\@limitsd_#1{%
   \temptoksa={#1}%
   \@temp
}
\def\dint{\msi@int\displaystyle\int}%
\def\diint{\msi@int\displaystyle\iint}%
\def\diiint{\msi@int\displaystyle\iiint}%
\def\diiiint{\msi@int\displaystyle\iiiint}%
\def\didotsint{\msi@int\displaystyle\idotsint}%
\def\doint{\msi@int\displaystyle\oint}%
\def\ExitTCILatex{\makeatother }
\if@compatibility\message{amsmath already loaded}\fi\aftergroup\ExitTCILatex}
\if@compatibility\message{amstex already loaded}\fi\aftergroup\ExitTCILatex}
\if@compatibility\message{amsgen already loaded}\fi\aftergroup\ExitTCILatex}
\let\DOTSI\relax
\def\RIfM@{\relax\ifmmode}%
\def\FN@{\futurelet\next}%
\def\iint{\DOTSI\intno@\tw@\FN@\ints@}%
\def\iiint{\DOTSI\intno@\thr@@\FN@\ints@}%
\def\iiiint{\DOTSI\intno@4 \FN@\ints@}%
\def\idotsint{\DOTSI\intno@\z@\FN@\ints@}%
\def\ints@{\findlimits@\ints@@}%
\newif\iflimtoken@
\newif\iflimits@
\def\findlimits@{\limtoken@true\ifx\next\limits\limits@true
 \else\ifx\next\nolimits\limits@false\else
 \limtoken@false\ifx\ilimits@\nolimits\limits@false\else
 \ifinner\limits@false\else\limits@true\fi\fi\fi\fi}%
\def\multint@{\int\ifnum\intno@=\z@\intdots@                          
 \else\intkern@\fi                                                    
 \ifnum\intno@>\tw@\int\intkern@\fi                                   
 \ifnum\intno@>\thr@@\int\intkern@\fi                                 
 \int}
\def\multintlimits@{\intop\ifnum\intno@=\z@\intdots@\else\intkern@\fi
 \ifnum\intno@>\tw@\intop\intkern@\fi
 \ifnum\intno@>\thr@@\intop\intkern@\fi\intop}%
\def\intic@{%
    \mathchoice{\hskip.5em}{\hskip.4em}{\hskip.4em}{\hskip.4em}}%
\def\negintic@{\mathchoice
 {\hskip-.5em}{\hskip-.4em}{\hskip-.4em}{\hskip-.4em}}%
\def\ints@@{\iflimtoken@                                              
 \def\ints@@@{\iflimits@\negintic@
   \mathop{\intic@\multintlimits@}\limits                             
  \else\multint@\nolimits\fi                                          
  \eat@}
 \else                                                                
 \def\ints@@@{\iflimits@\negintic@
  \mathop{\intic@\multintlimits@}\limits\else
  \multint@\nolimits\fi}\fi\ints@@@}%
\def\intkern@{\mathchoice{\!\!\!}{\!\!}{\!\!}{\!\!}}%
\def\plaincdots@{\mathinner{\cdotp\cdotp\cdotp}}%
\def\intdots@{\mathchoice{\plaincdots@}%
 {{\cdotp}\mkern1.5mu{\cdotp}\mkern1.5mu{\cdotp}}%
 {{\cdotp}\mkern1mu{\cdotp}\mkern1mu{\cdotp}}%
 {{\cdotp}\mkern1mu{\cdotp}\mkern1mu{\cdotp}}}%
\def\RIfM@{\relax\protect\ifmmode}
\def\text{\RIfM@\expandafter\text@\else\expandafter\mbox\fi}
\let\nfss@text\text
\def\text@#1{\mathchoice
   {\textdef@\displaystyle\f@size{#1}}%
   {\textdef@\textstyle\tf@size{\firstchoice@false #1}}%
   {\textdef@\textstyle\sf@size{\firstchoice@false #1}}%
   {\textdef@\textstyle \ssf@size{\firstchoice@false #1}}%
   \glb@settings}
\def\textdef@#1#2#3{\hbox{{%
                    \everymath{#1}%
                    \let\f@size#2\selectfont
                    #3}}}
\newif\iffirstchoice@
\def\Let@{\relax\iffalse{\fi\let\\=\cr\iffalse}\fi}%
\def\vspace@{\def\vspace##1{\crcr\noalign{\vskip##1\relax}}}%
\def\multilimits@{\bgroup\vspace@\Let@
 \baselineskip\fontdimen10 \scriptfont\tw@
 \advance\baselineskip\fontdimen12 \scriptfont\tw@
 \lineskip\thr@@\fontdimen8 \scriptfont\thr@@
 \lineskiplimit\lineskip
 \vbox\bgroup\ialign\bgroup\hfil$\m@th\scriptstyle{##}$\hfil\crcr}%
\def\Sb{_\multilimits@}%
\def\endSb{\crcr\egroup\egroup\egroup}%
\def\Sp{^\multilimits@}%
\newdimen\ex@
\def\rightarrowfill@#1{$#1\m@th\mathord-\mkern-6mu\cleaders
 \hbox{$#1\mkern-2mu\mathord-\mkern-2mu$}\hfill
 \mkern-6mu\mathord\rightarrow$}%
\def\leftarrowfill@#1{$#1\m@th\mathord\leftarrow\mkern-6mu\cleaders
 \hbox{$#1\mkern-2mu\mathord-\mkern-2mu$}\hfill\mkern-6mu\mathord-$}%
\def\leftrightarrowfill@#1{$#1\m@th\mathord\leftarrow
\mkern-6mu\cleaders
 \hbox{$#1\mkern-2mu\mathord-\mkern-2mu$}\hfill
 \mkern-6mu\mathord\rightarrow$}%
\def\overrightarrow{\mathpalette\overrightarrow@}%
\def\overrightarrow@#1#2{\vbox{\ialign{##\crcr\rightarrowfill@#1\crcr
 \noalign{\kern-\ex@\nointerlineskip}$\m@th\hfil#1#2\hfil$\crcr}}}%
\def\overleftarrow{\mathpalette\overleftarrow@}%
\def\overleftarrow@#1#2{\vbox{\ialign{##\crcr\leftarrowfill@#1\crcr
 \noalign{\kern-\ex@\nointerlineskip}$\m@th\hfil#1#2\hfil$\crcr}}}%
\def\overleftrightarrow{\mathpalette\overleftrightarrow@}%
\def\overleftrightarrow@#1#2{\vbox{\ialign{##\crcr
   \leftrightarrowfill@#1\crcr
 \noalign{\kern-\ex@\nointerlineskip}$\m@th\hfil#1#2\hfil$\crcr}}}%
\def\underrightarrow{\mathpalette\underrightarrow@}%
\def\underrightarrow@#1#2{\vtop{\ialign{##\crcr$\m@th\hfil#1#2\hfil
  $\crcr\noalign{\nointerlineskip}\rightarrowfill@#1\crcr}}}%
\def\underleftarrow{\mathpalette\underleftarrow@}%
\def\underleftarrow@#1#2{\vtop{\ialign{##\crcr$\m@th\hfil#1#2\hfil
  $\crcr\noalign{\nointerlineskip}\leftarrowfill@#1\crcr}}}%
\def\underleftrightarrow{\mathpalette\underleftrightarrow@}%
\def\underleftrightarrow@#1#2{\vtop{\ialign{##\crcr$\m@th
  \hfil#1#2\hfil$\crcr
 \noalign{\nointerlineskip}\leftrightarrowfill@#1\crcr}}}%
\def\qopnamewl@#1{\mathop{\operator@font#1}\nlimits@}
\let\nlimits@\displaylimits
\def\setboxz@h{\setbox\z@\hbox}
\def\varlim@#1#2{\mathop{\vtop{\ialign{##\crcr
 \hfil$#1\m@th\operator@font lim$\hfil\crcr
 \noalign{\nointerlineskip}#2#1\crcr
 \noalign{\nointerlineskip\kern-\ex@}\crcr}}}}
 \def\rightarrowfill@#1{\m@th\setboxz@h{$#1-$}\ht\z@\z@
  $#1\copy\z@\mkern-6mu\cleaders
  \hbox{$#1\mkern-2mu\box\z@\mkern-2mu$}\hfill
  \mkern-6mu\mathord\rightarrow$}
\def\leftarrowfill@#1{\m@th\setboxz@h{$#1-$}\ht\z@\z@
  $#1\mathord\leftarrow\mkern-6mu\cleaders
  \hbox{$#1\mkern-2mu\copy\z@\mkern-2mu$}\hfill
  \mkern-6mu\box\z@$}
\def\projlim{\qopnamewl@{proj\,lim}}
\def\injlim{\qopnamewl@{inj\,lim}}
\def\varinjlim{\mathpalette\varlim@\rightarrowfill@}
\def\varprojlim{\mathpalette\varlim@\leftarrowfill@}
\def\varliminf{\mathpalette\varliminf@{}}
\def\varliminf@#1{\mathop{\underline{\vrule\@depth.2\ex@\@width\z@
   \hbox{$#1\m@th\operator@font lim$}}}}
\def\varlimsup{\mathpalette\varlimsup@{}}
\def\varlimsup@#1{\mathop{\overline
  {\hbox{$#1\m@th\operator@font lim$}}}}
\def\align{\@verbatim \frenchspacing\@vobeyspaces \@alignverbatim
You are using the "align" environment in a style in which it is not defined.}
\let\csname endalign*\endcsname =\endtrivlist
\def\alignat{\@verbatim \frenchspacing\@vobeyspaces \@alignatverbatim
You are using the "alignat" environment in a style in which it is not defined.}
\let\csname endalignat*\endcsname =\endtrivlist
\def\xalignat{\@verbatim \frenchspacing\@vobeyspaces \@xalignatverbatim
You are using the "xalignat" environment in a style in which it is not defined.}
\let\csname endxalignat*\endcsname =\endtrivlist
\def\gather{\@verbatim \frenchspacing\@vobeyspaces \@gatherverbatim
You are using the "gather" environment in a style in which it is not defined.}
\let\csname endgather*\endcsname =\endtrivlist
\def\multiline{\@verbatim \frenchspacing\@vobeyspaces \@multilineverbatim
You are using the "multiline" environment in a style in which it is not defined.}
\let\csname endmultiline*\endcsname =\endtrivlist
\def\arrax{\@verbatim \frenchspacing\@vobeyspaces \@arraxverbatim
You are using a type of "array" construct that is only allowed in AmS-LaTeX.}
\def\tabulax{\@verbatim \frenchspacing\@vobeyspaces \@tabulaxverbatim
You are using a type of "tabular" construct that is only allowed in AmS-LaTeX.}
\let\csname endarrax*\endcsname =\endtrivlist
\let\csname endtabulax*\endcsname =\endtrivlist
 \def\endequation{%
     \ifmmode\ifinner 
      \iftag@
        \addtocounter{equation}{-1} 
        $\hfil
           \displaywidth\linewidth\@taggnum\egroup \endtrivlist
        \global\tag@false
        \global\@ignoretrue   
      \else
        $\hfil
           \displaywidth\linewidth\@eqnnum\egroup \endtrivlist
        \global\tag@false
        \global\@ignoretrue 
      \fi
     \else   
      \iftag@
        \addtocounter{equation}{-1} 
        \eqno \hbox{\@taggnum}
        \global\tag@false%
        $$\global\@ignoretrue
      \else
        \eqno \hbox{\@eqnnum}
        $$\global\@ignoretrue
      \fi
     \fi\fi
 } 
 \newif\iftag@ \tag@false
 \def\TCItag{\@ifnextchar*{\@TCItagstar}{\@TCItag}}
 \def\@TCItag#1{%
     \global\tag@true
     \global\def\@taggnum{(#1)}%
     \global\def\@currentlabel{#1}}
 \def\@TCItagstar*#1{%
     \global\tag@true
     \global\def\@taggnum{#1}%
     \global\def\@currentlabel{#1}}
     \def\tag{\@ifnextchar*{\@tagstar}{\@tag}}
     \def\@tag#1{%
         \global\tag@true
         \global\def\@taggnum{(#1)}}
     \def\@tagstar*#1{%
         \global\tag@true
         \global\def\@taggnum{#1}}
\newcommand{\EE}{{\mathbb E}}
\newcommand{\NN}{{\mathbb N}}
\newcommand{\PP}{{\mathbb P}}
\newcommand{\RR}{{\mathbb R}}
\newcommand{\bP}{P}
\newcommand{\bQ}{{\cal{Q}}}
\newcommand{\mR}{{\cal{R}}}
\newcommand{\bE}{{\bf E}}
\newcommand{\J}{{\cal J}}
\newcommand{\hH}{{\widehat H}}
\newcommand{\aH}{{\cal H}}
\newcommand{\aS}{{\cal S}^{\downarrow}}
\newcommand{\aaS}{{\cal S}}
\newcommand{\X}{{\cal X}}
\newcommand{\mN}{\mathfrak{N}}
\newcommand{\mK}{\cal{P}}
\newcommand{\mL}{\cal{P}}
\newcommand{\opi}{\underline{\pi}}
\newcommand{\oPi}{\underline{\Pi}}
\newcommand{\uoPi}{{\Pi}^*}
\newcommand{\hpi}{\widehat{\pi}}
\title[Notes]{A note on the convergence of the Bayesian entropy estimator for 
exchangeable partitions}
\author{Servet Martinez$^{1}$}
\thanks{{\it Corresponding author:} Servet Martinez, smartine@dim.uchile.cl}
\address{$^{1}$ Departamento de Ingenier\'{i}a Matem\'{a}tica \\
Centro Modelamiento Matem\'{a}tico\\
UMI 2807, UCHILE-CNRS \\
Casilla 170-3 Correo 3, Santiago, CHILE\\
E-mail: smartine@dim.uchile.cl}
\date{\today}
\begin{document}
\newtheorem{property}{Property}

\begin{abstract}
We show that when the proportions of a countable set of species are organized as an
exchangeable partition of the unit interval and we take a sample on it, then 
the Bayesian posterior entropy 
converges a.s. and in $L^1$ to the entropy of the species when 
the sample size diverges to infinity.  
\end{abstract}

\maketitle

\textbf{Running title}: Posterior entropy estimator for exchangeable partitions.\newline

\bigskip

\noindent {\bf AMS Classification Number:} 62B10, 94A17

\bigskip 

\noindent {\bf Keywords:} Entropy, Bayesian posterior distribution, 
exchangeable partitions, Poisson-Dirichlet Process.

\noindent{\bf Founding Institution:} ANID Basal PIA program FB210005 Center for Mathematical Modeling. 

\bigskip-

\section{Introduction}
\label{sec0}
The study of the diversity of abundance of species  when the number of species and 
their abundance are unknown has been approached by using several models, many of 
which take the Poisson Dirichlet process (PDP) 
as the prior distribution of the abundance of species.  For instance this was done 
in \cite{buntine2012}, and in \cite{favaro} and \cite{sharif2008},  
in ecology and machine learning respectively.
For this prior, the study of the diversity of abundance of species  was studied in 
\cite{chao2003}  by using the Shannon entropy as an index of diversity. 
The estimation of the entropy when data on the observation of 
species is collected was studied in \cite{archer2014}, by considering  
the posterior Bayesian entropy  and the plug-in estimator of the entropy.
Here we study these estimators in the more general framework of exchangeable partitions 
and we sharpen the convergence description of the posterior Bayesian entropy by
using a martingale characterization. 

\medskip

To be more precise let $H(\cdot)$ be the Shannon entropy  of the 
random masses of an exchangeable partition of the unit interval. Take 
a sample of size $n$ by using i.i.d. random variables in $[0,1]$ and denote  
the number of  classes containing the first $n$ observations by $k_n$ and 
the total number of individuals in the $i-$th class by $\pi^n(i)$.
Let $\hH_n$ be the plug-in entropy estimator 
$\hH_n(\cdot)=-\sum_{i=1}^{k^n}(\pi^n(i)/n)\log (\pi^n(i)/n)$ and let
$\aH_n(\cdot)=-\sum_{i\in \NN} q^n_i \log q^n_i$
be the posterior Bayesian entropy which is the entropy of the classes 
distributed as $q^n=(q^n(j): j\in \NN)\sim 
P(\cdot | \pi^n(1),..,\pi^n(k^n))$, 
the posterior distribution of the exchangeable partition given the sample.

\medskip

In \cite{archer2012}, the entropy and its estimators were studied when the prior is a PDP
distribution. The mean of the entropy was given in formula (12),
and an explicit formula for the posterior Bayesian entropy $\aH_n$ was found
in terms of the digamma function, based upon the characterization of the posterior distribution 
$P(\cdot | \pi^n(1),..,\pi^n(k^n))$ given in Corollary $20$ in \cite{pitman1996}. 
Moreover, in Theorem $3$ in  \cite{archer2014},  
it was shown that $|\hH_n-\aH_n|\to 0$ in probability as $n\to \infty$. 
This uses the behavior of the digamma function close to the log for big values
and that $k^n/n\to 0$ in probability (the latter following from Section $3$ and 
Proposition $2$ in \cite{gnedin}). In formula $(13)$ in \cite{archer2014},
is was also shown that the entropy has finite second moment.

\medskip

In this work we enlarge the study of the entropy to the case 
when the prior is an exchangeable random partition
and show the consistency of the entropy estimators.
In Proposition \ref{th1}, we show that when the exchangeable partition has finite mean entropy, 
one has 
\begin{equation}
\label{eq0}
\lim\limits _{n\to \infty}\aH_n(\cdot)=H(\cdot)=\lim\limits_{n\to \infty} \hH_n(\cdot) 
\hbox{ a.s. and in }L^1.
\end{equation}
Moreover, if the second moment of the entropy is finite, the above convergence holds in $L^2$ and
if the entropy has finite $p-$th moment for some $p>1$, the first equality holds in $L^p$.
To prove the first equality in (\ref{eq0}), we show and use that the posterior 
Bayesian entropy is an integrable martingale when the sample size varies in $\NN$. The convergence
of the plug-in estimator follows from Corollary $1$ in \cite{antos}.   
When the second moment is finite, the martingale property gives an increasing process 
with the sample size, see Remark \ref{rem2}.

\medskip

The relation (\ref{eq0}) also holds when one replaces the entropy $H$ by some other function 
$G:\aS\to \RR$ which is symmetric in its arguments and satisfies the moment conditions.
While the result holds for the  Bayesian posterior without any other requirements, for
the plug-in estimator, the result needs $G$ to be  an additive function that satisfies additional conditions.

\medskip

In Section \ref{sec2} we state and show Proposition \ref{th1}, which is our main result.
In Section \ref{sec1} we introduce some of the main notions for exchangeable partitions.
We closely follow the presentation in \cite{bertoin}, but with some minor variation in the way 
the concepts are introduced which is more convenient for presenting and showing the result. 
Finally, we discuss this result in the context of the PDP process in Section \ref{sec3}.

\medskip

\section{Exchangeable partitions}
\label{sec1}
A random partition $\Xi=\{\xi(i)\}$ of $\NN$ is called exchangeable if its law is invariant under
the class of permutations $\{\varphi\}$ of $\NN$ that are the identity after some arbitrary finite 
integer. Then, $\varphi(\Xi)\sim \Xi$ $\; \forall \sigma$, where 
$\varphi(\Xi)=\{\varphi^{-1}(\Xi(i))\}$. In this section we 
give some of the main notions allowing to describe this class of partitions, and we follow 
closely Section 2.3 in \cite{bertoin}. 

\medskip

Let $\aS=\{s=(s(i): i\in \NN): s(i\!+\!1)\ge s(i)\!\ge \!0\;  \forall i\in \NN, \, \sum_{i\in \NN}s(i)=1\}$. 
As done in \cite{bertoin} Section 2.1.2, 
it is useful to associate to each $s\in \aS$ a collection of disjoint open intervals 
$\J^s=(J^s(i): i\in \NN)$ of $[0,1]$ such that the sequence $(|J^s(i)|^{\downarrow} )$ of the 
interval lengths ranked in a decreasing way, is equal to $s$. Let $\bP$ be a probability law on $\aS$. 
The mean expected value with respect to $\bP$ is denoted by $\bE_\bP$. 

\medskip

Let $\X=(X_n: n\ge 1)$ be a sequence of i.i.d. Uniform r.v's in $[0,1]$ independent of $s\sim \bP$. 
The law of $\X$ is denoted by $\mR$, it is invariant under 
the set of permutations $\{\varphi\}$ of $\NN$ that satisfy $\varphi(i)=i$ for some $i>n(\varphi)$.   
So, when writing $\varphi(\X)=(X_{\varphi(n)}: n\in \NN)$, one has
$\varphi(X)\sim \mR$. We will consider the probability law $\PP=\bP\otimes \mR$ of the pair $(s,\X)$,
and the conditional distribution given $s$ is denoted by $\mK(\cdot | s)$. 

\medskip

Define a random sequence $\X^*=\X^*(s,\X)=(X^*_n: n\ge 1)$ taking values in $\NN$ and 
obeying the recursion: 
$X^*_1=1$ and if, up to $n$, the sequence $X_1,...,X_n$ have visited $k^n$ 
different intervals in $\J^s$, then 
\begin{eqnarray*}
&{}& X^*_{n+1}\!=\!j , k^{n+1}\!=\!k^n \hbox{ if for some } j\!\in \!\{1,..,k^n\}, 
X_{n+1} \hbox{ is in the same interval as }X^*_j;\\
&{}& X^*_{n+1}\!=\!k^n\!\!+\!\!1, k^{n+1}\!=\!k^n\!\!+\!\!1 \hbox{ if }
X_{n+1} \hbox{ is in an interval not being visited before }n.
\end{eqnarray*}
Most of the notions will only depend on the probability distribution of the pair $(s,\X^*)$. 
 
\medskip

Let $\xi^n=(\xi^n(1),..,\xi^n(k^n))$ be the partition of $\{1,..,n\}$  
given by $\xi^n(i)=\{j\in \{1,..,n\}: X^*_j=i\}$. The sequence of partition $(\xi^n: n\in \NN)$ 
is compatible (that is $\xi^{n+1}|_{\{1,..,n\}}=\xi_n \; \forall n$) 
and it defines an exchangeable random partition $\Xi=(\xi(i): i\in \NN)$ of $\NN$.
Let $\pi^n(j)=\# \xi^n(j)$ be the number
of elements of $\xi^n(j)$. Define the vector $\pi^n=(\pi^n(1),..,\pi^n(k^n))$ 
ranked by the order of the indexes that firstly visit the new intervals. One has $k_1=1$, 
$\pi^1=(1)$ and $\pi^{n+1}\in \{\pi^n\!+\delta_{k^n}(j): j\in \{1,...,k^n\}\}\cup \{(\pi^n,1)\}$
where $\delta_k(i)$ is the vector of dimension $k$ with all $0'$s except by a $1$ in position $j$.
Define $\Pi=(\pi^n: n\in \NN)$, we have $\pi=\Pi(s,\X)$. A recurrence argument 
shows that the passage from $\pi^n$ to $\pi^{n+1}$ 
determines the value of $X^*_n$, then the random elements $\Pi$, $\X^*$ and
$\Xi$, mutually determine each other. 
 
\medskip

There exists the asymptotic frequencies 
$\hpi(i)=\lim\limits_{n\to \infty} \#(\xi^n(i)\cap \{1,..,n\})/n$ $\,\mK( \cdot | s)$ a.s. 
and $\hpi=(\hpi(i): i\in \NN)$ is a size-biased reordering of $s$, 
see Proposition $2.8$ in \cite{bertoin}. In a reciprocal way, in Kingman theory
it is shown that every random exchangeable 
partition $\Xi$ posseses asymptotic frequencies a.s. given by a vector of frequencies $\hpi$, 
see \cite{kingman} and also \cite{pitman1995}. Then, the
decreasing ranked sequence $\hpi^{\downarrow}$ is distributed with
some law $\bP$ in $\aS$. Hence, since $\Xi$ and $\Pi$ determine one another, 
the law of an exchangeable random partition
can be set in the form $\mK(\Pi |s) d\bP(s)$, see Theorem $2.1$ in \cite{bertoin}.

\medskip
 
Let $\X_n=(X_1,..,X_n)$ and $\Pi_n=(\pi^1,..,\pi^n)$. We have $\Pi_n=\Pi_n(s,\X_n)$. 
Let $\PP_n$ be the joint probability of $(s,\Pi_n)$, then
$d\PP_n(s,\Pi_n)=\mK(\Pi_n | s) d\bP(s)$. 
Let $\bQ$ be the marginal distribution of $\Pi$, so 
$d\PP_n(s,\Pi_n)=\mL(ds | \Pi_n)\bQ(\Pi_n)$, where  
$\mL(\cdot | \Pi_n)$ is the conditional distribution given $\Pi_n$. 
We claim that,  
\begin{equation}
\label{eq-1}
\mL(ds, X^*_{n+1} | \Pi_n)=\mL(ds, X^*_{n+1} | \pi^n). 
\end{equation}
Let us show it. The set of values taking by the variable $\Pi_n(s,\X_n)$ is denoted
by ${\mN}_n$. For a value $\opi^n$ taken by $\pi_n(s,\X_n)$, we set
$\mN_n(\opi^n)=\{\Pi_n=(\pi_n,..,\pi^n)\in \mN_n: \pi_n=\opi_n\}$ and so
$\{\pi^n(s,\X_n)=\opi^n\}=\bigcup_{\oPi_n\in \mN_n(\opi^n)} \{\Pi_n(s,\X_n)=\oPi_n\}$. 
Fix some value $\uoPi_n\in \mN_n(\opi^n)$.
For every $\oPi_n\in \mN_n(\opi^n)$ one can find a permutation $\sigma$ of $\{1,..,n\}$ 
that satisfies $\{\Pi_n(s,\X_n)=\oPi_n\}=\{\Pi_n(s,\sigma(\X_n))=\uoPi_n\}$.
The equality 
$$
\PP(ds, \! X^*_{n+1},\! \Pi_n(\!s,\! \X_n)\!\!=\!\oPi_n)\!\!=\!\PP(ds, \! X^*_{n+1}, 
\! \Pi_n(\!s,\! \sigma(\! \X_n))\!\!=\!\uoPi_n)
\!\!=\!\PP(ds, \! X^*_{n+1},\! \Pi_n(\!s,\! \X_n)\!\!=\!\uoPi_n),
$$ 
gives 
$\PP(ds, X^*_{n+1} |\pi_n(s,\X_n)=\opi_n)=\PP(ds, X^*_{n+1} | \Pi_n(s,\X_n)=\uoPi_n)$, 
so (\ref{eq-1}) follows.

\medskip

Let us use (\ref{eq-1}) to show that
the marginal distribution $\bQ$ has the following Markovian structure,
\begin{equation}
\label{eq-2}
\bQ(\Pi_n)=\prod_{k=1}^{n-1} Q(\pi^k; \pi^{k+1}) \hbox{ with } Q(\pi^k; \pi^{k+1})=\mL(\pi^{k+1} | \pi^k).
\end{equation}
Since $\pi^{n+1}=\pi^{n+1}(\pi^n,X^*_{n+1})$, we use (\ref{eq-1}) to get 
$$
\PP(\Pi_{n+1})\!=\!
\PP(\Pi_n)\mL(\Pi^{n+1} | \Pi_n)
\!=\!\PP(\Pi_n)\mL(\pi^{n+1} | \Pi_n)=\PP(\Pi_n)\mL(\pi^{n+1} | \pi^n).
$$
Then, $\PP(\Pi_{n+1} | \Pi_n)=Q(\pi^n; \pi^{n+1})$. Since $\bQ(\Pi_n)=\PP(\Pi_n)\; \forall n$ and 
$\PP(\Pi_1=(1))=1$, we get (\ref{eq-2}).

\medskip
 
From (\ref{eq-1}) we get 
$d\PP_n(s,\Pi_n)=\mL(ds | \Pi_n)\bQ(\Pi_n)=\mL(ds | \pi^n)\bQ(\Pi_n)$.
Then, every measurable nonnegative $g_n:\aS \times {\mN}_n\to \RR_+$ satisfies 
\begin{eqnarray}
\nonumber
\int\limits_{\aS}\! \sum\limits_{\Pi_n\in \mN_n}  \!\!\! g_n(s,\Pi_n) 
\mK(\Pi_n | s) d\bP(s)=\!\int \!   \! g_n d\PP_n\!&=&\!\!\!
\sum\limits_{\Pi_n\in \mN_n}\int\limits_{\aS} \!\! g_n(s,\Pi_n) \mL(ds | \Pi_n) \bQ(\Pi_n)\\
\label{eq2}
&=&\!\!\!
\sum\limits_{\Pi_n\in \mN_n}\int\limits_{\aS} \!\! g_n(s,\Pi_n) \mL(ds | \pi^n) \bQ(\Pi_n).
\end{eqnarray}
The first two equalities are found in Lemma $1$ in \cite{ishwaran2003}.

\medskip

\section{Main Result}
\label{sec2}
The Shannon entropy of a distribution
$\rho=(\rho_i: i\in I)$ on a countable set $I$ is given by 
$H(\rho) = - \sum_{i\in I}\rho_i \log(\rho_i)$. So, the entropy of $s\in \aS$ is
$H(s)=-\sum_{i\in \NN}s(i) \log(s(i))$ and the mean entropy is 
$\bE_\bP( H)=\bE_\bP(-\sum_{i\in \NN}s(i) \log(s(i)))$. We will assume that
$\bE_\bP( H)$ is finite.

\medskip

The structural distribution of the random partition 
is the law of $\hpi(1)$, we call it $F$. 
In  relation ($25$) in \cite{pitman2-1996} it is shown that every Borel function $g:[0,1]\to \RR_+$,  
satisfies $\bE_\bP\left(\sum_{i\in \NN} g(s(i))\right)=\int_0^1 \frac{g(x)}{x} dx$. 
So, $\bE_\bP(H)=-\int_0^1 \log(x)dF(x)$ and 
$\bE_{\bP}(H)\!< \!\infty$ is equivalent to $\int_0^1 {\log}(x)dF(x)\!> \!-\infty$.

\medskip

The Bayes posterior mean and the plug-in estimator of the entropy at step $n$ are, respectively, 
given by $\bE_{\mL(\cdot| \pi^n)}(H)$ and $H(\pi^n/n)$.

\medskip

Let  $\aaS=\{s=(s(i): i\! \in \! \NN): s(i)\!\ge \!0\;  \forall i\!\in \! \NN, \sum_{i\in \NN}s(i)=1\}$.
A function $G:\aaS\to \RR$, $s\to \RR$, is symmetric (in its arguments) if 
$G(s)=G(s')$ where $s'\in \aS$ is the sequence of the components $(s(i))$ ranked in a decreasing way.
So $G:\aS\to \RR$ and its integrability properties refer to the probability 
$\bP$ in $\aS$.

\medskip

We state our main result for both, the mean posterior and the plug-in 
entropy estimators. But the proof for the plug-in estimator follows directly from \cite{antos}. 

\medskip

\begin{proposition}
\label{th1}
Assume $\bE_{\bP}(H)<\infty$. Then, 
\begin{equation}
\label{eqth}
\lim\limits_{n\to \infty}\bE_{\mL(\cdot| \pi^n)}(H)=H
=\lim\limits_{n\to \infty}  H(\pi^n/n) \; \; \PP\!-\!\hbox{a.s. and in } L^1(\PP).
\end{equation}
If $\bE_{\bP}(H^p)<\infty$, then the first limit holds in $L^p(\PP)$ and if
$\bE_{\bP}(H^2)<\infty$ then the second limit holds in $L^2$.

\medskip

Moreover, when $G:\aaS\to \RR$ is symmetric and belongs to $L^1(\bP)$, 
then 
\begin{equation}
\label{eqth2}
\lim\limits_{n\to \infty}\bE_{\mL(\cdot| \pi^n)}(G)=G \; \; \PP\!-\!\hbox{a.s. and in } L^1(\PP),
\end{equation}
and if $\bE_{\bP}(G^p)<\infty$ for some $p>1$, then the limit holds in $L^p(\PP)$.
\end{proposition}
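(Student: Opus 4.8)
The plan is to treat the two equalities in (\ref{eqth}) separately: the Bayesian posterior estimator converges by a Doob martingale argument, and the plug‑in estimator is handled via \cite{antos}. For the first equality, let $\F_n=\sigma(\Pi_n)$, completed with respect to $\PP$. Since the partition sequence is compatible, $\Pi_n$ is a deterministic function of $\Pi_{n+1}$, so $(\F_n)_{n\ge1}$ is a filtration; write $\F_\infty=\bigvee_n\F_n=\sigma(\Pi)$. The identity $\mL(ds\mid\Pi_n)=\mL(ds\mid\pi^n)$ derived from (\ref{eq-1}) gives
\[
M_n:=\bE_{\mL(\cdot\mid\pi^n)}(H)=\int_{\aS}H(s)\,\mL(ds\mid\Pi_n)=\bE_\PP\!\big(H(s)\mid\F_n\big),
\]
so $(M_n,\F_n)$ is the Doob martingale of the random variable $H(s)$, which lies in $L^1(\PP)$ because $\bE_\PP|H(s)|=\bE_\bP(H)<\infty$ by hypothesis; in particular $(M_n)$ is uniformly integrable.

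Next, the martingale convergence theorem gives $M_n\to M_\infty:=\bE_\PP(H(s)\mid\F_\infty)$ both $\PP$‑a.s.\ and in $L^1(\PP)$; and if $\bE_\bP(H^p)<\infty$ for some $p>1$, then $\sup_n\|M_n\|_p\le\|H(s)\|_p$ by Jensen, while Doob's maximal inequality gives $\sup_n|M_n|\in L^p(\PP)$, so dominated convergence promotes the convergence to $L^p(\PP)$. It remains to identify $M_\infty$ with $H(s)$, for which it suffices that $s$ be $\F_\infty$‑measurable. The asymptotic frequencies $\hpi(i)=\lim_n\pi^n(i)/n$ exist $\PP$‑a.s.\ (Proposition $2.8$ in \cite{bertoin}, Kingman's theory), and each is an a.s.\ limit of $\F_n$‑measurable functions, hence $\F_\infty$‑measurable; since $\hpi$ is a size‑biased reordering of $s$, its decreasing rearrangement satisfies $\hpi^{\downarrow}=s$ $\PP$‑a.s., so $s$, and therefore $H(s)$, is $\F_\infty$‑measurable. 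Thus $M_\infty=H(s)=H$, which is the first equality of (\ref{eqth}) in the stated senses. The assertion (\ref{eqth2}) follows by the identical argument: $\bE_{\mL(\cdot\mid\pi^n)}(G)=\bE_\PP(G(s)\mid\F_n)$ is a uniformly integrable (Doob) martingale, and since $s$ is $\F_\infty$‑measurable and $G$ is symmetric, $\bE_\PP(G(s)\mid\F_\infty)=G(s)=G$, with the $L^p$ refinement as above.

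For the second equality of (\ref{eqth}), note that conditionally on $s$ the interval labels of $X_1,X_2,\dots$ form an i.i.d.\ sample from the discrete distribution $s$ on $\NN$, and $H(\pi^n/n)$ is the plug‑in entropy of the empirical distribution of the first $n$ of them (relabeling classes by order of discovery does not change the entropy). Hence, for $\bP$‑a.e.\ $s$ with $H(s)<\infty$, Corollary $1$ in \cite{antos} gives $H(\pi^n/n)\to H(s)$ $\mK(\cdot\mid s)$‑a.s., and integrating over $s$ yields $\PP$‑a.s.\ convergence to $H$. Since $H(\pi^n/n)\ge0$ and, by concavity of entropy, $\bE_{\mK(\cdot\mid s)}(H(\pi^n/n))\le H(s)\in L^1(\bP)$ with $\bE_{\mK(\cdot\mid s)}(H(\pi^n/n))\to H(s)$, dominated convergence gives $\bE_\PP(H(\pi^n/n))\to\bE_\bP(H)=\bE_\PP(H(s))$, so Scheffé's lemma upgrades the a.s.\ convergence to $L^1(\PP)$; when $\bE_\bP(H^2)<\infty$, the same scheme with the variance estimates of \cite{antos} gives convergence in $L^2(\PP)$.

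The only step I expect to require real care is the identification of the martingale limit, namely showing that $s$ — equivalently $H(s)$, or $G(s)$ — is measurable with respect to the $\sigma$‑algebra generated by the whole sequence of partitions; this rests on the a.s.\ existence of asymptotic frequencies and on $\hpi$ being a size‑biased reordering of $s$. Everything else is the standard machinery of uniformly integrable martingales, together with the moment control for the plug‑in estimator already available in \cite{antos}.
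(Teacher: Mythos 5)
Your proof of the first equality in (\ref{eqth}) and of (\ref{eqth2}) is essentially the paper's argument: identify $\bE_{\mL(\cdot|\pi^n)}(G)$ with the Doob martingale $\EE(G\,|\,\sigma(\Pi_n))$ via $\mL(ds|\Pi_n)=\mL(ds|\pi^n)$, invoke martingale convergence (the paper cites Neveu II-2-11 where you use Jensen plus Doob's maximal inequality for the $L^p$ refinement), and identify the limit through the a.s.\ existence of the asymptotic frequencies $\hpi$ and the symmetry of $G$. Your identification step is phrased slightly differently — you recover $s=\hpi^{\downarrow}$ as a $\sigma(\Pi)$-measurable object, whereas the paper writes $G(\hpi)=G(s)$ directly — but these are the same observation.

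Where you genuinely diverge is the $L^1$ (and $L^2$) convergence of the plug-in estimator. The paper integrates the uniform mean-square bound $\bE_{\mK(\cdot|s)}\bigl((H(\pi^n/n)-H(s))^2\bigr)\le 2K(\epsilon)n^{-\epsilon}$ from Theorem 1 and Corollary 1 of \cite{antos}, which delivers $L^2(\PP)$ (hence $L^1(\PP)$) convergence at an explicit rate. You instead use only the a.s.\ convergence from Corollary 1 of \cite{antos} together with nonnegativity, the Jensen bias bound $\bE_{\mK(\cdot|s)}(H(\pi^n/n))\le H(s)$, Fatou, dominated convergence in $s$, and Scheff\'e's lemma. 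This is more elementary and self-contained (it needs no rate information and only the first-moment hypothesis for $L^1$), at the cost of yielding no quantitative control; your $L^2$ statement under $\bE_{\bP}(H^2)<\infty$ is left as a sketch ("the same scheme with the variance estimates"), and to make it rigorous you would spell out that the uniform variance bound of \cite{antos} gives $\bE_{\mK(\cdot|s)}(H(\pi^n/n)^2)\to H(s)^2$ with an integrable dominating function $1+H(s)^2$, after which a.s.\ convergence plus convergence of second moments yields $L^2(\PP)$ convergence. Both routes are valid; the paper's is shorter once the uniform bound is granted, yours isolates exactly which ingredients of \cite{antos} are needed for each mode of convergence.
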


\begin{proof}
Let us prove (\ref{eqth2}). From hypothesis, 
$\infty > \bE_{\bP}(|G|)=\bE_\PP(|G|)$.
Fix some value $\oPi_n=(\opi^j: j=1,..,n)\in \mN_n$ and 
take $g_n(s,\oPi_n)= G(s) {\bf 1}(\Pi_n(s,\X_n)=\oPi_n)$ in  formula (\ref{eq2}). Then,
$$
\int_{\aS} G(s) \mK(\oPi_n | s)\bP(ds)
=\bQ(\oPi_n)\int_{\aS} G(s) \mL(ds | \opi^n)=\bQ(\oPi_n) \bE_{\mL(\cdot| \opi^n)}(G).
$$
Since 
$$
\int_{\aS} G(s) \mK(\oPi_n | s)\bP(ds)=\int_{\{\Pi_n=\oPi_n\}} \! G \, d\PP \; \hbox{ and } \; 
\bQ(\oPi_n)=\PP(\Pi_n(s,\X_n)=\oPi_n),
$$
we get
\begin{equation}
\label{eqmart1}
\bE_{\mL(\cdot| \opi^n)}(G)=\frac{1}{\PP(\Pi_n=\oPi_n)}\int_{\{\Pi_n=\oPi_n\}} G d\PP=
\EE(G|\sigma(\Pi_n))(\oPi).
\end{equation}
Here $\sigma(\Pi_n)$ is the $\sigma-$field generated by $\Pi$.
Then, $\left(\bE_{\mL(\cdot| \pi_n)}(G): n\in \NN\right)$ is an 
integrable $\PP-$martingale with respect to the filtration of $\sigma-$fields $(\sigma(\Pi_n): n\!\in \!\NN)$.
The limit $\sigma-$field is $\sigma(\Pi)$ ($\PP-$completed). 
We have $\lim\limits_{n\to \infty} \bE_{\mL(\cdot| \pi_n)}(G)=G$ $\, \PP-$a.s. 
because the $\sigma(\Pi)-$measurable sequence $\hpi=\lim\limits_{n\to \infty }\pi^n/n$ 
is a size-biased reordering of $s$ $\; \mK(\cdot | s)$ a.s.
and the symmetry of $G$ gives $G(\hpi)=G(s)$ $\; \mK(\cdot | s)$ a.s. Then, the $\PP-$a.s. 
convergence in (\ref{eqth2}) is satisfied.

\medskip

The martingale theorem for integrable martingales 
(for instance Proposition $II\!-\!2\!- \!11$ in \cite{neveu}) gives the 
$L^1(\PP)$ convergence in (\ref{eqth2}) and when  $\bE_{\bP}(G^p)<\infty$ for some $p>1$, the 
$L^p(\PP)$ convergence in  (\ref{eqth2}) follows from Proposition $II\!-\!2\!-\!11$ in \cite{neveu}.
So, the first equality in (\ref{eqth}) is satisfied and under $\bE_{\bP}(H^p)<\infty$ the assertion 
on the $L^p(\PP)$ convergence holds.

\medskip

Since $\bE_\bP(H)<\infty$ one has $H(s)<\infty$ $\, \bP$-a.s. So, from Corollary $1$ 
in \cite{antos} we get that  $\bP$-a.s.,  $H(\pi^n/n)\to H(s)$ $\; \mK(\cdot | s)$-a.s. Then 
 $H(\pi^n/n)\to H(s)$ $\; \PP$-a.s. 
From  Theorem $1$ and Corollary $1$ in \cite{antos} we get that $\bP$-a.s. for all 
$\epsilon\in (0,1)$ there is a constant $K(\epsilon)$ such that 
$\bE_{\mK(\cdot  | s)} ((H(\pi^n/n)-H(s))^2)\le 2K(\epsilon) n^{-\epsilon}$, so the bound does not depends on $s$. 
(For a sharper bound see Remark $(iv)$ in page \cite{antos} p.168).
Then, $\bE_{\PP} ((H(\pi^n/n)-H(s))^2)\le 2K n^{-\epsilon}$ and in the second equality of (\ref{eqth}),
 the $L^1(\PP)$ and the $L^2(\PP)$ convergence follow.
\end{proof}

\medskip

\begin{remark}
Property (\ref{eqth2}) expresses that the posterior Bayesian property holds for
an integrable symmetric function $G:\aaS\to \RR$. A similar generalization
can be stated for the plug-in estimators, but for a particular class of symmetric integrable functions $G$.  
It is required that they have the additive form $G(s)=\sum_{i\in \NN} f(s(i))$ where $f$ satisfies 
the conditions given in Theorem $1$ in \cite{antos} 
(this Theorem is stated for the class of functions that satisfy $G(s)=\sum_{i\in \NN} f_i(s(i))$, 
but together with symmetry this reduces to $f_i=f$). 
Finally, let us mention that other consistent estimators of the entropy for finite entropy partitions 
having an infinite number of classes, can be found in Sections  $4$ and $5$ in \cite{silva}.
\end{remark}

\medskip

\begin{remark}
\label{rem3}
Let $\opi^n$ be a value taken by $\pi^n(s,\X_n)$. Then,  
$C(\opi^n)=\{\opi^n+\delta_{k_n}(j): j\!=\!1,..,k_n\}\cup  \{(\opi^n,1)\}$
is the set of the values that can take $\pi^{n+1}$ when $\pi^n=\opi^n$. Since 
$$
E({\bf 1}_{\opi^{n+1}}|\Pi_n)=\mL(\opi^{n+1} | \pi^n)
=Q(\pi^n; \opi^{n+1}),
$$ 
the martingale property  
$\bE_{\mL(\cdot| \pi^{n})}(G)=E\left(\bE_{\mL(\cdot| \pi^{n+1})}(G)|\Pi_n\right)$ gives
$$
\bE_{\mL(\cdot| \pi^{n})}(G)=\sum_{\opi^{n+1}\in C(\pi^n)} 
\bE_{\mL(\cdot| \opi^{n+1})}(G)Q(\pi^n; \opi^{n+1}).
$$
\end{remark}

\medskip

\begin{remark}
\label{rem2}
Assume $H^2\in L^2(\PP)$. Then the sequence $(A_n: n\in \NN)$ given by 
$$
A_{n+1}-A_n=E\left((\bE_{\mL(\cdot| \pi^{n+1})}(H))^2|\pi^n\right)-\left(\bE_{\mL(\cdot| \pi^{n})}(H)\right)^2,
$$
is increasing in the sample size, where $E(\cdot |  \pi^n)$ is the mean expected value conditioned to $\pi^n$. 
See Section $VII\!-\!2$ in \cite{neveu}.
Other monotone functions on the sample size that are based upon a 
weighted difference of entropies in successive steps, is found in \cite{martinez}.
There, it is studied in detail their relation to the sample sizes when new species appear for 
the first time.
\end{remark}

\medskip

 \section{Poisson Dirichlet Distribution}
\label{sec3}
An important class of exchangeable partitions is given by the two parameter Poisson Dirichlet 
Process introduced in \cite{pitman1997}, with parameters $0\leq \alpha< 1$ and $\theta>-\alpha$. 
It  is denoted $PDP(\alpha,\theta)$. 
Its size-biased distribution $\pi=(\pi(i): i\in \NN)$ can be described by
a sequence of independent random variables 
$\beta_j\sim \, \text{Beta}(1-\alpha, \theta +\alpha j)$. It is
$\pi(1)= \beta_1$ and $\pi(j)= \beta_j \prod_{i=1}^{j-1}(1-\beta_i)$ for $j\geq 2$.

\medskip

The marginal distribution $\bQ$ satisfies formula (\ref{eq-2}).  In 
\cite{pitman1996} formula $(42)$ (also see relation $(33)$), 
and in \cite{bertoin} Theorem 2.3 and  Corollary 2.6,
it is shown that the transition kernel $Q(\Pi_n; \Pi_{n+1})=Q(\pi^n; \pi^{n+1})$ 
is given by the Pitman formula, 
\begin{equation}
\label{pit1}
Q(\pi^n; \pi^n\!+\!\delta_{k_n}(j))\!=\!
\frac{\pi^n(j) \!-\! \alpha}{\theta \!+\! n}, \, j=1,..,k^n \hbox{ and }
Q(\pi^n; (\pi^n,1)) 
\!=\! \frac{\theta \!+\! \alpha k^n}{\theta\!+ \! n}.
\end{equation}
In Corollary $20$ in \cite{pitman1996} 
it was shown that the posterior distribution 
$\mL(\cdot | \pi^n)$ chooses a random partition $\pi'$ with a law
$(p_1,\dots,p_{k^n},(1-\sum_{j=1}^{k^n} p_j)\pi'')$, where
\begin{eqnarray}
\nonumber
(p_1,\dots,p_{k^n},1\!-\!\sum_{j=1}^{k^n} p_j) 
&\!\sim \!& \text{\normalfont Dirichlet}(\pi^n(1)\!-\!\alpha, \dots, 
\pi^n(k^n)\!-\!\alpha,\theta\!+\!\alpha k^n), \\ 
\label{pit2}
\pi''\!=\!(\pi''(1),\pi''(2),\dots) &\! \sim\!& PDP(\alpha,\theta\!+\!\alpha k^n),
\end{eqnarray}
and they are independent. 

\medskip

In \cite{archer2012} formula $(10)$ and  \cite{archer2014} formula $(12)$ 
it is shown that   
$\bE_\PP(H) = \psi(\theta+1) - \psi(1-\alpha)$, 
where $\psi(x)
=\Gamma'(x)/\Gamma(x)$ is the digamma function. 
Based upon (\ref{pit2}), in \cite{archer2012} formula ($15$) it is shown 
that the posterior entropy $\aH_n$ satisfies,
$(\theta+n)\aH_n = (\theta+n)\psi(\theta+n+1) -(A_n+B_n)$  with
$A_n=(\theta +\alpha k^n)\psi(1-\alpha)$ and 
$B_n=\sum_{i=1}^{k^n}   (\pi^n_i-\alpha)\psi(\pi^n_i-\alpha+1)$.

\medskip

Now, when using (\ref{pit1}) and the property, $x\psi(x+1)=x\psi(x)+1$ for $x>0$,
one proves that the mean expected values of $A_{n+1}$ and $B_{n+1}$ conditioned to $\X_n$, satisfy:
\begin{eqnarray*}
E(A_{n+1} | \X_n)
&=&\frac{\theta+n+1}{\theta+n}A_n-(1-\alpha)\psi(1-\alpha)\frac{\theta+\alpha k^n}
{\theta+n} \hbox{ and }\\ E(B_{n+1} | \X_n)
&=& \frac{\theta+n+1}{\theta+n}B_n+1-\frac{\theta+\alpha k^n}{\theta+n}
+((1-\alpha)\psi(1-\alpha)+1)\frac{\theta+\alpha k^n}{\theta+n}.
\end{eqnarray*}
Then $(\theta+n+1)E(\aH_{n+1} | \X_n)=(\theta+n+1)\psi(\theta+n+1)-
 (\theta+n+1)\frac{A_n+B_n}{\theta+n}$, 
and so the martingale property $E(\aH_{n+1} | \X_n)=\aH_n$ is satisfied. But this does
not imply that the martingale converges in $L^1$ (see the paragraph 
after Proposition II-2-11 in \cite{neveu}). The a.s. and the $L^1$ convergence 
to $H$, follow from Proposition \ref{th1}.

\medskip

\noindent {\bf Acknowledgments.} This work was supported by the Center for 
Mathematical Modeling ANID Basal PIA program FB210005. The author thanks Jorge Silva from 
DII University of Chile for calling my attention to reference \cite{antos}.

\bigskip

Conflict of interest: The author declares that has no conflict of interest.

\medskip

Data Availability: Data sharing is not applicable to this article as not data sets were generated 
or analyzed during the current study.

\medskip

Declaration of generative AI and AI-assisted technologies: The author declares that has not used
this type of technogies during the current study.

\medskip

\end{document}